\newcommand{\be}{\begin{eqnarray}}
\newcommand{\ee}{\end{eqnarray}}
\newtheorem{theo}{Theorem}
\newtheorem{lemma}{Lemma}
\newtheorem{defi}{Definition}
\newtheorem{rem}{Remark}
\newcommand{\R}{\mathbb R}
\newcommand{\C}{\mathbb C}
\newcommand{\new}{\textcolor{red}}
\newcommand{\bxi}{\boldsymbol\xi}
\newcommand{\bomega}{\boldsymbol\omega}
\begin{document}

\title{Nonlinear Stability of \\ First-Order Relativistic Viscous Hydrodynamics\\}
\author{\it Heinrich Freist\"uhler\thanks{Department of Mathematics, University of Konstanz, 
78457 Konstanz, Germany} ~ {\rm and}\ \  Matthias Sroczinski$^*$}
\maketitle
\begin{abstract}
The paper shows time-asymptotic nonlinear stability of homogenous reference states for 
a very general class of first-order descriptions of relativistic viscous hydrodynamics.
This result, sharper than the previously known mere decay of individual Fourier modes, 
thus applies to the descriptions formulated in H. Freist\"uhler and B. Temple, 
{\em Proc.\ R. Soc.\ A} {\bf 470}, 20140055 (2014),
{\em J. Math.\ Phys.}\ {\bf 59}, 063101 (2018), 
F. S. Bemfica, M. Disconzi, and J. Noronha, 
{\em Phys.\ Rev.\ D} {\bf 98},104064 (2018), 
{\em Phys. Rev. D} {\bf 100}, 104020 (2019),
and Freist\"uhler, {\em J. Math.\ Phys.}\ {\bf 61}, 033101 (2020).
\end{abstract}
\newpage
\section{Introduction}
\setcounter{equation}{0}
Various proposals have recently been made for describing 
dissipative relativistic fluid dynamics by second-order 
systms of partial differential equations in the classical 
fluid variables, i.e., the velocity and one (in the barotropic)
or two (in the non-barotropic case) thermodynamic quantities.
The stability of such descriptions has been addressed by 
showing that linear waves, i.e., Fourier modes of the linearized system,
do not grow in time. While significant, this property does not 
only miss capturing the actual nonlinearity, 
but the mere knowledge of non-growth of modes does also not 
suffice to show stability in the sense of decay to homogeneous 
reference states at controlled temporal rates even at the linear level. 
The purpose of 
this paper is to close this gap for a broad class of models
for barotropic fluids.

We study dissipative relativistic fluid dynamics as described by  models of the form 
\be\label{fieldmodel}
\frac{\partial}{\partial x^{\beta}}\left(T^{\alpha\beta}+\Delta T^{\alpha\beta}\right)=0,
\ee
were the ideal part of the stress-energy tensor is given by
\be
T^{\alpha\beta}=\theta p'(\theta)u^\alpha u^\beta+p(\theta)g^{\alpha\beta},
\ee
with $u^\alpha$ the 4-velocity and the fluid is specified by an equation of state 
$p=p(\theta)$ that gives its pressure $p$ as a function of its temperature $\theta$, with 
\begin{equation} 
	\label{ba}
	p'(\theta),p''(\theta)>0. 
\end{equation}
Regarding the dissipative part, we explore tensors of the general form 
\cite{ft14,ft18}
\be
-\Delta T^{\alpha\beta}
\equiv
u^\alpha u^\beta P
+(\Pi^{\alpha\gamma} u^\beta + \Pi^{\beta\gamma}u^\alpha) Q_\gamma
+\Pi^{\alpha\beta}R 
+\Pi^{\alpha\gamma}\Pi^{\beta\delta}S_{\gamma\delta}
\label{ournewansatz}
\ee
\goodbreak
\new{where\footnote{\new{We use six minus signs so as to easily accommodate the BDN models
in the sequel.}}}
$$
P
=
\new{-}\tilde\kappa u^\gamma{\partial \theta\over \partial x^\gamma}
\new{-}\tilde\tau{\partial u^\gamma\over \partial x^\gamma},
\quad
R
=
\new{-}\tilde\omega u^\gamma{\partial \theta\over \partial x^\gamma}
\new{-}\tilde\chi{\partial u^\gamma\over \partial x^\gamma},
$$
\[
Q_\gamma
\equiv
\new{-}\tilde\nu{\partial \theta\over \partial x^\gamma}
\new{-}\tilde\mu u^\delta
{\partial u_\gamma\over \partial x^\delta},
\quad
S_{\gamma\delta}
\equiv
\tilde\eta\bigg({\partial u_\gamma\over \partial x^\delta}
+
{\partial u_\delta\over \partial x^\gamma}
-{2\over 3}g_{\gamma\delta}
{\partial u^\epsilon\over \partial x^\epsilon}\bigg),
\]
\[
\Pi^{\alpha\beta}\equiv g^{\alpha\beta}+u^\alpha u^\beta
\]
and the dissipation coefficients 
$$
\tilde\kappa,\tilde\tau,\tilde\omega,\tilde\chi,\tilde\nu,\tilde\mu,\tilde\eta
$$
are, in principle arbitrary, functions of $\theta$.
Using the natural variable 
$
\psi^\alpha=\theta^{-1}u^\alpha
$
that ranges in $\mathcal U=\{\psi \in \R^4: \psi_\alpha \psi^\alpha<0\}$, 
we express \eqref{fieldmodel} as
\begin{equation}
 	\label{cp1}
	 A^{\alpha\beta\gamma}(\psi(x))\frac{\partial\psi_\gamma}{\partial x^\beta}(x)
	 =
	 \frac\partial{\partial x^\beta }
	 \left(B^{\alpha\beta\gamma\delta}(\psi(x))\frac{\partial\psi_\gamma}{\partial x^\delta}\right),
\end{equation}
and consider solutions  $\psi:[0,\infty) \times \R^3 \to \mathcal U$ of \eqref{cp1} which satisfy
initial conditions 
\begin{equation}\label{cp2}
 \psi(0,\cdot)={^0}\psi, \quad \frac{\partial \psi}{\partial x_0}(0,\cdot)={^1}\psi.
\end{equation}
with given data ${^0}\psi,{^1}\psi:\R^3 \to \mathcal U$.

The purpose of the paper is to show nonlinear stability of homogeneous states in the following sense.

\begin{theo}
\label{nls}
Fix a number $s>5/2$ and assume that normalized versions 
$(\kappa,\omega,\nu)=\theta^{-2}(\tilde\kappa,\tilde\omega,\tilde\nu)$, 
$(\tau,\chi,\mu,\eta)=\theta^{-1}(\tilde\tau,\tilde\chi,\tilde\mu,\tilde\eta)$
of the dissipation coefficients           
satisfy  
$$
\kappa,\mu,\eta,\nu\sigma>0
\quad\text{ with }\quad 
\sigma=\chi-4\eta/3
$$
and either condition (C1), i.e,
\begin{align}
			\label{hb2}\tag{C1.1}
		 (\tau+\mu)(\nu+\omega)-\kappa\sigma-\nu\mu&>0,\\
		\label{hb3}\tag{C1.2} ((\tau+\mu)(\nu+\omega)-\kappa\sigma-\nu\mu)^2
                 -4\nu\mu\kappa\sigma&>0,\\
			\label{D31}\tag{C1.3}
			c_s^{-2}(\omega+\tau
			)-\kappa-c_s^{-4}\sigma &> 0,\\
			\nonumber
			(\kappa+c_s^{-2}\mu)(\tau+\omega+\mu- c_s^{-2}\sigma)[(\tau+\mu)(\omega+\nu)
                        -(\mu\nu+\kappa\sigma)]\qquad\qquad&\\
			\label{D32}\tag{C1.4}
			-\kappa \mu(\tau+\omega+\mu-c_s^{-2}\sigma)^2
                        -(\kappa+c_s^{-2}\mu)^2\nu\sigma &> 0,
			\end{align}
or condition (C2), i.e.,
	\begin{align}
		\label{C21}\tag{C2.1}
		\kappa\sigma=\nu\mu&<0,\quad \tau+\mu=\omega+\nu=0,\\
		\label{C22}\tag{C2.2}
		&\sigma+c_s^2\mu < 0,
	\end{align}
where $c_s=\sqrt{p'(\theta)/\theta p''(\theta)}>0$ is the speed of sound.\\ 
Then for any rest state $\bar\psi=(\bar\theta^{-1},0,0,0), \bar\theta>0$, 
there exist constants $\delta,C>0$ such that the following holds for all function pairs 
${^0}\psi,{^1}\psi: \R^3 \to \mathcal U$ with ${^0}\psi-\bar \psi  \in H^{s+1}\cap L^1$, 
${^1}\psi \in H^{s}\cap L^1$:\\
\goodbreak
If 
$$ 
 \|{^0}\psi-\bar \psi\|_{H^{s+1}},\|{^1}\psi\|_{H^s},\|{^0}\psi-\bar \psi\|_{L^1}, \|{^1}\psi\|_{L^1} <\delta,
$$ 
then there exists a unique global solution $\psi$ of \eqref{cp1}, \eqref{cp2} satisfying 
$\psi-\bar \psi \in C^0([0,\infty),H^{s+1})\cap C^1([0,\infty),H^{s})$ and, for all $t \in [0,\infty)$, 
	\begin{align*}
		\|\psi(t)-\bar \psi\|_{H^s}+\|\psi_t(t)\|_{H^{s-1}}  &\le C(1+t)^{-\frac{3}{4}}(\|{^0}\psi-\bar \psi\|_{H^s}+\|{^1}\psi\|_{H^{s-1}}
+\|{^0}\psi-\bar \psi\|_{L^1}+\|{^1}\psi\|_{L^1}).
	\end{align*}
\end{theo}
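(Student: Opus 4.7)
The plan is to follow the Matsumura--Nishida / Shizuta--Kawashima paradigm for hyperbolic--parabolic systems: prove sharp linear decay for \eqref{cp1} linearized at $\bar\psi$ via a Fourier/spectral analysis, and then close a nonlinear bootstrap by combining the resulting semigroup estimates (in Duhamel form) with Kawashima-type high-order energy estimates. Because of the second-order time derivative hidden in the $B^{\alpha 0\gamma 0}\partial_0\psi_\gamma$ piece on the right of \eqref{cp1}, the system is effectively second order in time, so the natural state space is $H^{s+1}\times H^s$ for $(\phi,\phi_t)$ with $\phi=\psi-\bar\psi$, which matches the prescribed data and the asserted regularity.

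Linearizing at $\bar\psi$ yields a constant-coefficient system whose Fourier symbol, by spatial rotational invariance, block-diagonalizes into one longitudinal (acoustic/thermal) block and two identical transverse (shear) blocks. The claimed $(1+t)^{-3/4}$ rate for $L^1\cap L^2$ data in three space dimensions is the standard heat-type rate and will follow from Hausdorff--Young together with a low/high frequency decomposition once every eigenvalue $\lambda(\xi)$ of the symbol obeys the uniform genuine-coupling bound
\[
\operatorname{Re}\lambda(\xi)\le -c\,\frac{|\xi|^2}{1+|\xi|^2},\qquad c>0.
\]
I would verify this bound by writing the two relevant characteristic polynomials explicitly in terms of $\kappa,\tau,\omega,\chi,\nu,\mu,\eta$, $c_s$ and $|\xi|$ and applying the Routh--Hurwitz criterion blockwise. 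The positivity hypotheses $\kappa,\mu,\eta,\nu\sigma>0$ take care of the transverse block and the leading low-frequency diffusive coefficient in the longitudinal block, while (C1.1)--(C1.4) are precisely the Hurwitz-minor positivities needed to prevent any root of the longitudinal symbol from crossing into $\{\operatorname{Re}\ge 0\}$ at any $|\xi|>0$. Case (C2) is a degenerate but still dissipative limit in which the symbol factorizes further, and (C2.1)--(C2.2) supply the remaining dissipation after this factorization.

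Local well-posedness of \eqref{cp1}--\eqref{cp2} in $C^0([0,T],H^{s+1})\cap C^1([0,T],H^s)$ for $s>5/2$ is standard for this class of quasilinear second-order systems (Kato--Majda--Kawashima). For globalization and the decay, I introduce the bootstrap quantities
\[
M(T)=\sup_{0\le t\le T}(1+t)^{3/4}\bigl(\|\phi(t)\|_{H^s}+\|\phi_t(t)\|_{H^{s-1}}\bigr),\quad E(T)=\sup_{0\le t\le T}\bigl(\|\phi(t)\|_{H^{s+1}}+\|\phi_t(t)\|_{H^s}\bigr),
\]
write the equation in Duhamel form around the linear semigroup, and combine the linear decay with Moser-type tame product estimates on the quadratic remainder $N(U)$ to derive an inequality of the form $M(T)+E(T)\le C(\mathcal D_0+(M(T)+E(T))^2)$, with $\mathcal D_0$ the data norm appearing in the theorem; the $L^1$ piece of the source, needed to reinvoke the linear decay under the Duhamel integral at each time, is controlled by the embedding $L^2\cdot L^2\hookrightarrow L^1$ applied to $N$. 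A standard continuation argument then closes the bootstrap for $\delta$ small and delivers both global existence and the claimed rate.

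I expect the main obstacle to be the uniform spectral bound in the linear step. The broad algebraic family of coefficients $(\tilde\kappa,\tilde\tau,\tilde\omega,\tilde\chi,\tilde\nu,\tilde\mu,\tilde\eta)$ allowed by (C1) and (C2) effectively rules out any soft Lyapunov or compensating-matrix construction, so the Kawashima genuine-coupling property must be extracted through a hands-on Routh--Hurwitz analysis of the longitudinal characteristic polynomial; conditions (C1.1)--(C1.4) appear precisely tailored to the signs of the successive Hurwitz determinants. The degenerate case (C2), in which some Hurwitz minors vanish, will need a separate direct diagonalization to exhibit the surviving dissipation. Once the uniform spectral bound is in place, the nonlinear portion of the argument should be of an essentially classical form.
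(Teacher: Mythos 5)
Your plan is built on the hyperbolic--parabolic (Matsumura--Nishida/Shizuta--Kawashima) paradigm, but the system \eqref{cp1} is not of that type: the right-hand side is itself a second-order \emph{hyperbolic} operator (this is the whole point of conditions (C1.1)--(C1.2) and of the symbolic-symmetrizer condition (H$_B$)), so the linearization has no parabolic smoothing and the problem is hyperbolic--hyperbolic in the sense of \cite{FS}. This breaks your linear step at its core: a blockwise Routh--Hurwitz analysis only locates the roots $\lambda(\bxi)$ of the dispersion relation for each fixed $\bxi$, i.e.\ it gives decay of individual Fourier modes (condition (D3) in the paper). As the introduction of the paper stresses, in this hyperbolic--hyperbolic setting mode decay does \emph{not} suffice for decay at controlled rates in Sobolev norms even at the linear level, because the diagonalizing transformations degenerate in the limits $\xi\to 0$ and $\xi\to\infty$, where eigenvalues collide; a pointwise bound $\operatorname{Re}\lambda(\xi)\le -c\,\xi^2/(1+\xi^2)$ does not control $\|e^{tM(\xi)}\|$ uniformly. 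What is needed in addition are the refined conditions (D1) and (D2) of \cite{FS}: uniform negativity of the first-order corrections ($W_1$, $\mathcal W_1$) restricted to the eigenspaces of the limiting symbols $W_0$ (low frequency) and $\mathcal W_0$ (high frequency), formulated through symbolic symmetrizers. In the paper these are exactly what (C1.3) captures (Lemma 3(ii)) and what the nondegeneracy (C1.4) guarantees on top of (D3) (Lemma 3(iii)); your reading of (C1.1)--(C1.4) as ``precisely the Hurwitz minors'' of the longitudinal polynomial conflates hyperbolicity conditions, mode-decay conditions, and these eigenspace conditions, and your proposal contains no substitute for (D1)/(D2).

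The same structural issue affects your nonlinear step: the Duhamel-plus-energy bootstrap you sketch relies on energy estimates of Kawashima type, which again use parabolicity of the second-order part; for quasilinear second-order hyperbolic--hyperbolic systems the required global-in-time energy/decay machinery is precisely the content of \cite{sro23} (building on \cite{FS}), and it takes the verified conditions (H$_B$) and (D) as input. The paper's proof is therefore a reduction: quote \cite{sro23}, then verify (H$_B$) and (D) under (C1) or (C2) by explicit rest-frame computations (the longitudinal/transverse splitting you also propose, Lemmas 1--3, including the case (C2) where $B_\parallel$ is a positive multiple of $-B_\parallel^{00}$ and everything reduces to (C2.2)). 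To repair your argument you would either have to invoke that reduction as well, or reprove its linear and nonlinear estimates from scratch --- in which case the uniform eigenspace conditions (D1), (D2), not Routh--Hurwitz alone, are the missing key idea.
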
 

I.\ e., at least for initial data that are sufficiently small perturbations 
of the homogeneous reference state $\bar\psi$, a unique solution 
to the nonlinear Cauchy problem exists globally in time 
and decays time-asymptotically to $\bar\psi$ at the rate $t^{-3/4}$.  
\par\vskip .5mm
In the statement of the theorem, the notations $C^k,H^l,$ and $L^1$ mean the usual spaces of functions 
that are $k$ times continuously differentiable, square integrable together with their
derivatives of up to $l$th order, or integrable, respectively, and $\Vert.\Vert_{H^l},
\Vert .\Vert_{L^1}$ are the natural norms in $H^l$ and $L^1$. 
We derive the assertion by appealing to a result   
of the second author \cite{sro23} which reduces global existence and decay of solutions in these
spaces for a general class of systems of the form \eqref{cp1} to two algebraic criteria, the hyperbolicity 
condition $(H)_B$ and the dissipativity condition $(D)$ which we detail in Secs.\ 2 and 3. 
Our concrete task here is thus the verification of these conditions under the assumptions (C1) 
or (C2). 

Theorem 1 treats models introduced in \cite{ft14,ft18,bem18,bem19} from a common perspective
regarding specific aspects of their dissipativity. While the decay of Fourier modes as such had been 
considered in these papers, we here check the refined criterion (D) of \cite{FS} that, differently from 
the hyperbolic-parabolic situation of \cite{kawa}, is needed in the present hyperbolic-hyperbolic 
context in addition to mere decay of modes 
in order to ensure asymptotic stability in the sense of decay of solutions in appropriate function spaces.\\
For the models considered in \cite{bem19}, criterion (D) indeed again amounts to the same inequalities,
namely (C.1), as are stated in that paper for the most relevant case $\nu=\mu$.\footnote{Inequalities
(C1.3), (C1.4) only look slightly different from (10a) and (10b) in \cite{bem19} since {\it some} 
coefficients scale with $c_s^2$, thus $\chi_1=c_s^2\kappa$ and $\chi_3=c_s^2\omega$.} But only our checking (D) here 
confirms the decay in Sobolev spaces we report in Theorem 1.\\
For two prototypical cases, the assertion of Theorem 1 has been 
established before, in \cite{sro18,sro23}.

The theorem  generalizes to non-quiescent rest states ($\bar u\neq (1,0,0,0)$). While the mere decay of Fourier 
modes transfers from the rest frame (under natural causality assumptions, cf.\ Sec.\ 4 and \cite{bem22}),  
corresponding analysis of criterion (D) is left to a future publication. 

\begin{rem}
We note that Theorem 1 distinguishes two situations: the positive case ((C2), $B_{\parallel}>0$) and the negative case ((C1), $B_{\parallel}<0$). In the positive case,
the model is of Hughes-Kato-Marsden type \cite{Hug77,ft14}, in the negative case it is not.
\end{rem}

In the following, we use the matrix notation
$$ 
B^{\beta\delta}=( B^{\alpha\beta\gamma\delta})_{0\le \alpha,\gamma \le 3},
\quad  
A^{\beta}=( A^{\alpha\beta\gamma})_{0 \le \alpha,\gamma \le 3}
$$
and the Fourier symbols
\begin{align*}
	 B(\psi,\bxi)&= B^{jk}(\psi)\xi_j\xi_k, \quad
	 C(\psi,\bxi)=(B^{0j}(\psi)+ B^{j0}(\psi))\xi_j,\\
	 A(\psi,\bxi)&= A^j(\psi)\xi_j,~~\bxi=(\xi_1,\xi_2,\xi_3) \in \R^3,~\xi=|\bxi|.
\end{align*}
\goodbreak
\newpage
\section{Hyperbolicity}
\setcounter{equation}0
By a classical result of Taylor \cite{T91} the Cauchy problem \eqref{cp1}, \eqref{cp2} is locally well-posed 
if the differential operator 
\begin{equation}\label{diffopB}
\psi\mapsto
B^{\alpha\beta\gamma\delta}(\psi(x))\frac{\partial^2\psi_\gamma}
{\partial x^\beta\partial x^\delta}
\end{equation}
is (second-order) hyperbolic in the following sense:

\begin{enumerate}
	\item[(H$_B$)] (a) $B^{00}$ is negative definite,\\
	(b) with $\check B^{\beta\delta}$ defined through
	$$
	\check B^{\beta\delta}= (-B^{00})^{-1/2}B^{\beta\delta}(-B^{00})^{-1/2},
	$$
	the matrix family
	$$
	i\mathcal B(\psi,\bomega)=i\begin{pmatrix}
		0&I_4\\
		\check B(\psi,\bomega)&i\check C(\psi,\bomega)
	\end{pmatrix},
	\quad (\psi,\bomega)\in\mathcal U\times S^{2},
	$$
	permits a symbolic symmetrizer $\mathcal S$.
\end{enumerate}

Recall that for 
	$M \in C^\infty(\mathcal U \times S^{d-1},\C^{n \times n})$, a symbolic symmetrizer is a 
        smooth mapping $\Sigma \in C^\infty(\mathcal U \times S^{d-1},\C^{n \times n})$, 
        bounded as well as all its derivatives,  such that for some $c>0$ and all 
        $(\psi,\bomega) \in \mathcal U \times S^{d-1}$
	$$\Sigma(\psi,\bomega)=\Sigma(\psi,\bomega)^* \ge cI_n,
        ~~\Sigma(\psi,\bomega)M(\psi,\bomega)=(\Sigma(\psi,\bomega)M(\psi,\omega))^*.$$
Clearly, if a matrix family $M$ admits a symbolic symmetrizer all eigenvalues of $M(\psi,\bomega)$ 
are real and semi-simple. On the other hand a matrix family $M$ admits a symbolic symmetrizer if the 
latter holds and additionally the multiplicities of the eigenvalues of 
$M(\psi,\bomega)$ are constant on $\mathcal U \times S^{d-1}$. This motivates the following notion.

\textit{Definition. We call $B^{\alpha\beta\gamma\delta}$ or, more precisely, the differential operator 
\eqref{diffopB}, 
\emph{semi-strictly hyperbolic} if 
(a) holds and for all $(\psi,\bomega) \in \mathcal U\times S^{2}$ the eigenvalues of 
$i\mathcal B(\psi,\bomega)$ as defined in (b) are real semi-simple with multiplicities 
independent of $(\psi,\bomega)$.}

Since we will show that $B^{\alpha\beta\gamma\delta}$ is hyperbolic if and only if it is semi-strictly 
hyperbolic,  it is sufficient to show (H$_B$) in the rest frame due to Lorentz invariance. 
In the following all matrices are evaluated at $\bar \psi$ without further indication.

First note that the rest-frame representations are explicitly given as
$$
B^{00}=-\begin{pmatrix} \kappa &0 \\ 0 & \mu I_3 \end{pmatrix}
$$
and
$$
B(\bxi)=
  -\begin{pmatrix} \nu \xi^2 & 0 \\0 & (\chi-\frac13 \eta)\xi\xi^t-\eta\xi^2 I_3 \end{pmatrix},
\quad C(\bxi)=
  -\begin{pmatrix} 0& (\tau+\mu)\bxi^t\\
  (\omega+\nu)\bxi & 0\end{pmatrix}.
$$
It is straightforward to see that for any $\bxi \in \R^3$ the matrices 
$B^{00}, B(\bxi), C(\bxi)$ all decompose in the sense of linear operators as 
$B^{00}=B^{00}_\parallel\oplus B^{00}_\perp$,  $B(\bxi)=B_\parallel(\xi) \otimes B_\perp(\xi)$, 
$C(\xi)=C_\parallel(\xi) \oplus C_\perp(\xi)$ with respect to the orthogonal decomposition 
$\C^4=(\C \times\C\bxi)\oplus(\{0\} \times \{\bxi\}^{\perp})$, 
where
\begin{align}\label{BBC}
B^{00}_\parallel&=-\begin{pmatrix}\kappa & 0\\0 & \mu\end{pmatrix},&B^{00}_\perp&=-\mu I_2,\notag\\
B_{\parallel}(\xi)&=-\xi^2\begin{pmatrix} \nu & 0\\0 & \sigma \end{pmatrix},&B_\perp(\xi)&=\xi^2\eta I_2,\\
C_\parallel(\xi)&=-\xi\begin{pmatrix} 0 & \tau+\mu\\ \omega+\nu & 0\end{pmatrix},&C_\perp(\xi)&=0,\notag
	\end{align}
so that we can treat hyperbolicity on 
$\C \times \C\bxi$ 
and
$\{0\} \times \{\bxi\}^\perp$  
separately. We also just write $B_\parallel$ instead of 
$B_\parallel(1)$, etc., if there is no concern for confusion.
	.
\begin{lemma}
	\label{hb}
$B^{\alpha\beta\gamma\delta}$ satisfies (H$_B$) if and only if 
$\kappa,\mu,\eta>0$ and either 
\begin{itemize}
 \item[(i)] $\nu\sigma>0$ and \eqref{hb2} and \eqref{hb3}, or
 \item[(ii)] $\nu\sigma>0$ and \eqref{C21}, or 
 \item[(iii)] $\nu=\sigma=0 ~\text{~and~}(\tau+\mu)(\omega+\nu)>0$
\end{itemize}
are true.
\end{lemma}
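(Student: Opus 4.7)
The plan is to exploit the orthogonal splitting (\ref{BBC}) of the rest-frame symbols to reduce (H$_B$) to two independent $4\times 4$ eigenvalue problems, one on $\C\times\C\bxi$ (the parallel block) and one on $\{0\}\times\{\bxi\}^\perp$ (the perpendicular block). By Lorentz invariance and the discussion preceding the lemma, it suffices to check, in the rest frame, that $i\mathcal B(\bar\psi,\bomega)$ has real semi-simple eigenvalues whose multiplicities are independent of $\bomega$, in addition to (a); the latter reads $\kappa,\mu>0$, which I impose throughout.

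A short Schur-complement computation shows that the eigenvalues of either $4\times 4$ block $i\mathcal B$ are the zeros of $\det(\lambda^2 I + \lambda\,\check C - \check B)$. On the perpendicular block, where $\check C_\perp=0$ and $\check B_\perp=(\eta/\mu)\xi^2 I_2$, this yields $\lambda=\pm\sqrt{\eta/\mu}\,\xi$ with two-dimensional eigenspaces, so real semi-simple eigenvalues of constant multiplicity require exactly $\eta>0$.

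On the parallel block, expanding the $2\times 2$ determinant and multiplying by $\kappa\mu$ gives the quadratic
\begin{equation*}
\kappa\mu\,p^2 - \xi^2\bigl[(\tau+\mu)(\omega+\nu)-\kappa\sigma-\nu\mu\bigr]\,p + \nu\sigma\,\xi^4 = 0
\end{equation*}
for $p:=\lambda^2$. I then split into three regimes according to the sign of $\nu\sigma$. If $\nu\sigma<0$, Vieta forces one negative root $p$ and hence a non-real $\lambda$, ruling out (H$_B$). If $\nu\sigma>0$, both roots share a sign, and they are real and positive iff the linear coefficient is positive (which is \eqref{hb2}) and the discriminant is nonnegative. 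Strict positivity of the discriminant \eqref{hb3} yields two simple positive $p$'s and hence four simple real eigenvalues of $i\mathcal B_\parallel$ (case (i)); a vanishing discriminant produces a double root $p$, and its semi-simplicity in the $4\times 4$ block requires the $2\times 2$ matrix $\lambda^2 I + \lambda\check C_\parallel - \check B_\parallel$ to vanish identically at that $p$, which a direct calculation shows is equivalent to $\tau+\mu=\omega+\nu=0$ together with $\kappa\sigma=\nu\mu$, i.e.\ exactly (C2.1) (case (ii)). Finally, $\nu\sigma=0$ places $\lambda=0$ among the eigenvalues of the parallel block with algebraic multiplicity two; geometric-equals-algebraic then forces $\check B_\parallel=0$, hence $\nu=\sigma=0$, and positivity of the remaining root reads $(\tau+\mu)(\omega+\nu)>0$ (case (iii)). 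Constant multiplicities in $\bomega$ follow from rest-frame isotropy.

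The main obstacle is not the localization of the roots, which is elementary Vieta, but the semi-simplicity check at the coincident-root configurations in cases (ii) and (iii): one must verify that the $4\times 4$ parallel block is actually diagonalizable at these repeated $\lambda$'s and not merely that the characteristic polynomial has the right multiplicities. This amounts to an explicit rank computation of the matrix $\lambda^2 I + \lambda\check C_\parallel - \check B_\parallel$ at the repeated root, whose rigidity is precisely what dictates the algebraic identities in (C2.1) for case (ii) and the simultaneous vanishing $\nu=\sigma=0$ for case (iii).
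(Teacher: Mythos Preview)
Your proof is correct and follows essentially the same route as the paper: both use the rest-frame block decomposition \eqref{BBC}, settle the perpendicular block via $\eta>0$, reduce the parallel block to the quadratic $\mathfrak p$ in $p=\lambda^2$ (the paper's $b^2$), and identify cases (i)--(iii) through Vieta together with the observation that a repeated eigenvalue of the companion block is semi-simple iff the quadratic pencil $\lambda^2 I+\lambda\check C_\parallel-\check B_\parallel$ vanishes there. One cosmetic slip: when you write ``the linear coefficient is positive (which is \eqref{hb2})'' you mean the \emph{sum of the roots} is positive, i.e.\ the bracketed quantity in \eqref{hb2} is positive, not the coefficient of $p$ itself; and in case (ii) you might note explicitly that the sign $\kappa\sigma=\nu\mu<0$ in \eqref{C21} is forced by positivity of the double root $p_0=-\nu/\kappa$.
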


\begin{proof}
	Trivially, $-B^{00}<0$ is equivalent to $\kappa, \mu >0$; we assume this to be the case.  
Next, we treat (H$_B$) (b), separately on the spaces $\C \times\C\bxi$ and 
$\{0\} \times \{\bxi\}^\perp$. We first find that 
$$
\mathcal B_\perp=\begin{pmatrix}
		0 & I_2\\
		-\check B_\perp & i \check C_\perp
	\end{pmatrix}=\begin{pmatrix}
     0& I_2\\
     -(\eta/\mu)I_2 &0
\end{pmatrix}
$$
	has the eigenvalues $\pm\sqrt{-\eta/\mu}$, which are purely imaginary and semi-simple (of multiplicity two) if and only if $\eta>0$. 
	
	 Next, we see that $\lambda=ib$, $b \in \R$, is an eigenvalue of
	$$\mathcal B_\parallel=\begin{pmatrix}
		0 & I_2\\
		-\check B_\parallel & i\check C_\parallel
	\end{pmatrix}$$
if and only if
\begin{equation}
	\label{pi}
	0=\det(-B_\parallel^{00}b^2-B_\parallel-bC_\parallel)=\kappa\mu b^4-((\tau+\mu)(\nu+\omega)-\kappa\sigma-\nu\mu)b^2+\nu\sigma
:=\kappa\mu \mathfrak p(b^2).
\end{equation}

Clearly, the quadratic $\mathfrak p$ has only positive real roots, i.e.\ 
$\mathcal B_\parallel$ has only purely imaginary eigenvalues, if and only if
\begin{align*}
	 &(\tau+\mu)(\nu+\omega)-\kappa\sigma-\nu\mu \ge 0,\\
	&((\tau+\mu)(\nu+\omega)-\kappa\sigma-\nu\mu)^2-4\nu\mu\kappa\sigma \ge 0.
\end{align*}
If $\nu\sigma>0$, \eqref{hb2} and \eqref{hb3} hold, these eigenvalues are all distinct. 
Next, note that for an eigenvalue $\lambda=ib$ of $\mathcal B_\parallel$ the eigenvectors  
are of the form $(V,ibV)$ with 
$$V \in \ker( B_\parallel^{00}b^2+B_\parallel+bC_\parallel).$$
Thus $\lambda$ is semi-simple of multiplicity $2$  if and only if
\begin{equation} 
	\label{b}
	B_\parallel^{00}b^2+B_\parallel+bC_\parallel=0.
\end{equation}
$\lambda=0$ being a semi-simple eigenvalue of $\mathcal B_\parallel$ implies $\nu=\sigma=0$. 
And in this case the other eigenvalues are purely imaginary  and semi-simple if and only if 
$(\tau+\mu)(\omega+\nu)>0$. Lastly, due to \eqref{b} $\mathcal B_\parallel$ has two non-zero 
semi-simple eigenvalues of multiplicity $2$ if and only if \eqref{C21} holds.
\end{proof}

As mentioned the lemma shows that $B^{\alpha\beta\gamma\delta}$ is hyperbolic if and only 
if it is semi-strictly hyperbolic. In this case $\mathcal B(\omega)$ always has two 
eigenvalues of multiplicity $2$, which in the rest frame correspond to the eigenvalues of 
$\mathcal B_\perp$, and (i), (ii), (iii) above correspond to different multiplicities of the eigenvalues 
of $\mathcal B_\parallel$. In (i) $\mathcal B_\parallel$ all eigenvalues have multiplicity 1, in (ii) 
there are two distinct eigenvalues with multiplicity 2 and in case (iii) $0$ is an eigenvalue of 
multiplicity $2$ and there exist two distinct non-zero eigenvalues of multiplicity $1$. 
As (iii) implies $\mathcal B_\parallel =0$ this is not physical and we will not treat this case any 
further.
 
Both cases (i) and (ii)
have been studied before. 
Situation (i) with the further assumption $\nu=\mu$ recovers the equations proposed and 
investigated in \cite{bem19}. 
In situation (ii) \eqref{diffopB} is
second order symmetric hyperbolic in the sense defined in \cite{Hug77}; 
this case comprises the equations proposed in \cite{ft18}, where additionally 
$\mu=\sigma$.\footnote{Note that what is $-\sigma$ here is called $\sigma$ in \cite{ft18}.}

\section{Dissipativity}
It is well-known and also easily checked that under conditions \eqref{ba} the relativistic Euler 
operator 
$$
\psi\mapsto  A^{\alpha\beta\gamma}(\psi)\frac{\partial \psi_\gamma}{\partial x^\delta}
$$ 
is (first-order) symmetric hyperbolic, i.e., the matrices $A^\beta$ are symmetric with $A^0$ 
positive definite. Thus if $B^{\alpha\beta\gamma\delta}$ satisfies (H$_B$), system \eqref{cp1} 
is of hyperbolic-hyperbolic type as introduced in \cite{FS} and the non-linear stability of the 
rest state $\bar \psi=\bar \theta^{-1}(1,0,0,0)^t$, $\bar \theta>0$ fixed, is characterized by the following condition 
(we use 
$\bar B^{\beta\delta}=\check B^{\beta\delta}(\bar \psi),
\bar A^\beta=\check A^\beta(\bar\psi)=(-B^{00})^{-1/2}A^\beta(\bar\psi)(-B^{00})^{-1/2}$ etc.)  
\goodbreak

{\bf Condition (D).}  The tensors $\bar B^{\beta\delta}$, $\bar A^{\beta}$ satisfy three conditions:
\begin{itemize}
	\item[\quad(D1)] 
        There exists a symbolic symmetrizer $S$ for $(A^0)^{-1/2}A(A^0)^{-1/2}$ such that, with 
        $\bar S(\bomega)=S(\bar\psi,\bomega)$, for  
        for every $\bomega\in S^{2}$, all restrictions, as a quadratic form, of
	$$
	W_1=\bar S(\bomega)^{1/2}(\bar A^0)^{-1/2}\big(-\bar B(\bomega)
	+((\bar A^0)^{-1}\bar A(\bomega))^2+\bar C(\bomega)(\bar A^0)^{-1}\bar A(\bomega)\big)
	(\bar A^0)^{-1/2}\bar S(\bomega)^{-1/2}
	$$
	on the eigenspaces $E=J_E^{-1}(\C^n)$ of
	$$
	W_0(\bomega)=\bar S(\bomega)^{1/2}(\bar A^0)^{-1/2}\bar A(\bomega) (\bar A^0)^{-1/2}\bar S(\bomega)^{-1/2}
	$$
	are uniformly negative in the sense that
	$$
	J_E^*\left(
	W_1+W_1^*
	\right)J_E\le -\bar c\
	J_E^*J_E
	\quad\text{with one }\bar c>0.
	$$
	\item[(D2)]
	There exists a symbolic symmetrizer $\mathcal S$ for $i\mathcal B$ such that, with
        $\bar{\mathcal S}(\bomega)=\mathcal S(\bar\psi,\bomega)$ and $\bar{\mathcal B}(\bomega)=\mathcal B(\bar\psi,\bomega)$,    
        for every $\bomega\in S^{2}$, all restrictions, as a quadratic form, of
	$$
	\mathcal W_1=\bar {\mathcal S}(\bomega)^{1/2}
	\begin{pmatrix}
		0&0\\
		-i\bar A(\bomega)&-\bar A^0
	\end{pmatrix}
	\bar{\mathcal S}(\bomega)^{-1/2}
	$$
	on the eigenspaces $\mathcal E=\mathcal J_{\mathcal E}^{-1}(\C^{2n})$ of
	$$
	\mathcal W_0=
	\bar{\mathcal S}(\bomega)^{1/2}
	\bar{\mathcal B}(\bomega)
	\bar{\mathcal S}(\bomega)^{-1/2}
	$$
	are uniformly negative in the sense that
	$$
	\mathcal J_{\mathcal E}^*\left(
	\mathcal W_1+\mathcal W_1^*
	\right)\mathcal J_{\mathcal E}\le -\bar c\ \mathcal J_\mathcal E^*\mathcal J_\mathcal E
	\quad\text{with one }\bar c>0.
	$$
	\item[(D3)] All solutions
	$
	(\lambda,\bxi)\in\C\times(\R^d\setminus\{0\})$
	of the dispersion relation for \eqref{cp1} have
	{\rm Re}$(\lambda)<0$.
\end{itemize}

We note that with respect to 
the orthogonal decomposition $\C^4=(\C \times \bxi \C)\oplus(\{0\} \times \{\bxi\}^{\perp})$
also $A^0$ and $A(\bxi)$
decompose in the sense of linear operators as 
$$A^0=A^0_\parallel \oplus A^0_\perp,\qquad A(\bxi)=A_\parallel(\xi)\oplus A_\perp(\xi).
$$
So we can treat also the dissipativity on $\{0\} \times \{\bxi\}^\perp$ and $\C \times \C\bxi$ 
separately.

For the case mentioned in the last sentence of the previous section,
non-linear stability of the rest state was first  shown in \cite{sro18} --
a generalization for all systems that satisfy (ii) is straightforward:

\begin{lemma}
	\label{ft}
	Assume $\kappa,\mu,\eta>0$ and \eqref{C21}. Then (D1), (D2) and (D3) are 
all equivalent to \eqref{C22}.
\end{lemma}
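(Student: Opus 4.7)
The plan is to exploit the parallel/perpendicular decomposition from Section 2 and verify (D1)--(D3) block by block. Lorentz invariance reduces everything to the rest frame, and the block structure is independent of $\bomega$, so fixing one representative direction $\bxi$ suffices. On the perpendicular subspace $\{0\}\times\{\bxi\}^{\perp}$ the system decouples completely: $A_{\perp}(\bxi)=0$ because shear carries no linear flux in the rest frame, $C_{\perp}=0$, and $B^{00}_{\perp}=-\mu I_2$, $B_{\perp}(\bxi)=\eta\xi^2 I_2$, while $A^0_{\perp}$ is a positive scalar matrix by rotational invariance. The perpendicular dispersion relation reduces to a scalar damped-oscillator quadratic, whose roots have negative real part under $\mu,\eta>0$; for the same reason, the perpendicular restrictions of $W_1+W_1^*$ and $\mathcal W_1+\mathcal W_1^*$ are uniformly negative. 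No new condition arises from this block.

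All the action is on $\C\times\C\bxi$. Under (C21) two simplifications are decisive: $\tau+\mu=\omega+\nu=0$ forces $C_{\parallel}=0$, and $\kappa\sigma=\nu\mu$ collapses the quartic $\mathfrak p$ of \eqref{pi} to the perfect square $(\mu b^2+\sigma)^2/\mu$, so $\mathcal B_{\parallel}$ has two double purely imaginary eigenvalues $\pm i\sqrt{-\sigma/\mu}$. I would then write $A^0_{\parallel}$ and $A_{\parallel}(1)$ explicitly; since the relativistic Euler system is symmetric hyperbolic in $\psi^{\alpha}$, both are symmetric $2\times 2$ blocks with $A^0_{\parallel}>0$, and the eigenvalues of $(A^0_{\parallel})^{-1}A_{\parallel}(1)$ are the sound speeds $\pm c_s$. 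Symbolic symmetrizers $S$ for $(A^0)^{-1/2}A(A^0)^{-1/2}$ and $\mathcal S$ for $i\mathcal B$ can then be chosen canonically and smoothly in $\bomega$ in this $2\times 2$ setting.

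For (D3) I would compute the parallel dispersion polynomial in $\lambda$; after substituting $C_{\parallel}=0$ and simplifying with $\kappa\sigma=\nu\mu$, the roots satisfy $\mathrm{Re}(\lambda)<0$ iff the subcharacteristic condition $\sqrt{-\sigma/\mu}>c_s$ holds, which is exactly \eqref{C22}. For (D1) the eigenspaces of $W_0$ on $\C\times\C\bxi$ are the two one-dimensional sound-mode subspaces; restricting $W_1+W_1^*$ to each produces a scalar that, after using (C21), is a positive multiple of $-(\sigma+c_s^2\mu)$. For (D2) $\mathcal W_0$ inherits from $\mathcal B_{\parallel}$ two double eigenvalues with eigenvectors of the form $(V,\pm i\sqrt{-\sigma/\mu}\,V)$ where $V\in\ker(b^2 B^{00}_{\parallel}+B_{\parallel})$; computing the restriction of $\mathcal W_1+\mathcal W_1^*$ to these two-dimensional eigenspaces again reduces, via $\kappa\sigma=\nu\mu$, to a scalar proportional to $-(\sigma+c_s^2\mu)$.

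The main obstacle I anticipate is the bookkeeping in (D2): the double eigenvalues of $\mathcal B_{\parallel}$ have $2$-dimensional eigenspaces, so $\mathcal W_1+\mathcal W_1^*$ restricted to each is a $2\times 2$ block, and one needs a basis simultaneously adapted to $(A^0_{\parallel},A_{\parallel})$ and to $(B^{00}_{\parallel},B_{\parallel})$. The identity $\kappa\sigma=\nu\mu$ functions as a compatibility condition that makes the relevant cross-terms vanish; once the right basis is fixed, the remaining algebra is routine and the common factor $-(\sigma+c_s^2\mu)$ emerges in all three of (D1), (D2), (D3), yielding the equivalence with \eqref{C22}.
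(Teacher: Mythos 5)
Your overall route is the same as the paper's: split along $(\C\times\C\bxi)\oplus(\{0\}\times\{\bxi\}^{\perp})$, dispose of the perpendicular block using $A_\perp=C_\perp=0$, $B^{00}_\perp=-\mu I_2<0$, $B_\perp>0$, and reduce the parallel block via \eqref{C21}, which gives $C_\parallel=0$ and $\bar B_\parallel=\tilde\sigma I_2$ with $\tilde\sigma=-\sigma/\mu>0$. At that point the paper simply invokes \cite{FS}, Section 3.2, to conclude that (D1), (D2), (D3) are each equivalent to $\tilde\sigma^{1/2}\bar A^0_\parallel\pm\bar A_\parallel>0$, i.e.\ to $c_s<\tilde\sigma^{1/2}$, which is \eqref{C22}; your plan to verify the three conditions by direct computation is a legitimate, somewhat longer version of the same argument, and your treatment of the perpendicular block and of (D3) is consistent with it.

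There is, however, a sign error in the outcomes you announce for (D1) and (D2) which, taken literally, would establish the equivalence with the wrong inequality. With $c_s=1$ after the usual rescaling, $W_1$ restricted to the sound eigenspaces $E_\pm=\C(\pm1,1)^t$ equals $(1-\tilde\sigma)(\kappa+\mu)$, i.e.\ a \emph{positive} multiple of $\sigma+c_s^2\mu$, not of $-(\sigma+c_s^2\mu)$; (D1) requires this to be negative, which is precisely \eqref{C22}. Similarly, on the two-dimensional eigenspaces $\{(V,\pm iV):V\in\C^2\}$ of $\mathcal W_0$ the form $\mathcal W_1+\mathcal W_1^*$ is not a scalar but acts on $V$ as $\mp 2\left(\tilde\sigma^{-1/2}\bar A\pm\bar A^0\right)$, and its uniform negativity on both eigenspaces is exactly $\tilde\sigma^{1/2}\bar A^0\pm\bar A>0$, again equivalent to \eqref{C22}. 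If the restrictions really were positive multiples of $-(\sigma+c_s^2\mu)$, as you state, then the negativity demanded in (D1) and (D2) would force $\sigma+c_s^2\mu>0$, contradicting the lemma; note that your own (D3) criterion $\sqrt{-\sigma/\mu}>c_s$ is the correct orientation. So the strategy is sound and essentially the paper's, but the sign bookkeeping in (D1) and (D2) must be corrected (and the (D2) restriction treated as a $2\times2$ form) before the ``common factor'' argument closes the proof.
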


\begin{proof}
	
	 By the arguments presented in \cite{FS}, Section 3.2, (D) holds in general for 
         $-B^{00}, B(\bomega), A^0>0$ and $A(\bomega)=C(\bomega)=0$. 
         Thus it holds here on $\{0\} \times \{\bxi\}^\perp$.
	
	To study it on $\C \times \C\bxi$, set $\tilde \sigma:=-\sigma/\mu$ and note that 
        \eqref{C21}implies $B_\parallel=\tilde \sigma B_\parallel^{00}$, i.e.,
        $\bar B_\parallel=\tilde \sigma I_2$.
	As $\tilde \sigma>0$ conditions  (D1), (D2), (D3) are all equivalent to
	$$\tilde \sigma^{1/2}\bar A^0_\parallel\pm \bar A_\parallel(\xi)>0$$
	(cf. \cite{FS}, Section 3.2),
	which directly yields the assertion.
\end{proof}
\begin{lemma}
	\label{dissip}
	 Assume $\kappa,\mu,\eta>0$, $\nu\sigma>0$ and \eqref{hb2}, \eqref{hb3}. 
         Then the following hold
	 \begin{enumerate}
	 	\item[(i)] (D3) is equivalent to
	 \begin{equation}
	 	\label{D311}
	 	c_s^{-2}(\omega+\tau
	 	)-\kappa-c_s^{-4}\sigma \ge 0,
	 \end{equation}
 	\begin{equation}
 		\label{D321}
 	\begin{aligned}
 		(\kappa+c_s^{-2}\mu)(\tau+\omega+\mu-c_s^{-2}\sigma)[(\tau+\mu)(\omega+\nu)-(\mu\nu+\kappa \sigma)]&\\
 		-\kappa \mu(\tau+\omega+\mu-c_s^{-2}\eta)^2-(\kappa+c_s^{-2}\mu)^2\nu\sigma&\ge0.
 	\end{aligned}
 	\end{equation}
  	where at least one of the inequalities is strict. 
  	\item[(ii)]
  	(D1) is equivalent to \eqref{D31}.
  	\item[(iii)]
  	Assume (D3). Then (D2) is equivalent to
  	\begin{equation}
  		\label{D2}
  		\begin{aligned}
  		(\kappa+c_s^{-2}\mu)(\tau+\omega+\mu-c_s^{-2}\sigma)[(\tau+\mu)(\omega+\nu)-(\mu\nu+\kappa \sigma)]&\\
  		-\kappa \mu(\tau+\omega+\mu-c_s^{-2}\eta)^2-(\kappa+c_s^{-2}\mu)^2\nu\sigma&\neq 0.
  		\end{aligned}
  	\end{equation}
  	\end{enumerate}
\end{lemma}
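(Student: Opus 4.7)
\textbf{Plan for the proof of Lemma \ref{dissip}.}
The plan is to reduce all three claims to the two-dimensional parallel subspace by the same decomposition used in Lemma \ref{hb}. In the rest frame the Euler operator is invariant under rotations about $\bxi$, so $A^0$ and $A(\bxi)$ split along $\C^4=(\C\times\C\bxi)\oplus(\{0\}\times\{\bxi\}^\perp)$ in exactly the way that $B^{00},B(\bxi),C(\bxi)$ do in \eqref{BBC}, and moreover $A_\perp(\bxi)=0$, since the transverse components of $\bpsi$ decouple from the longitudinal ones and are merely advected at rest. Hence on $\{0\}\times\{\bxi\}^\perp$ one has $C_\perp=0$, $B_\perp(\xi)=\eta\xi^2 I_2>0$, $A_\perp=0$, $A^0_\perp>0$, and the general result of \cite{FS}, Section 3.2, already invoked in Lemma \ref{ft}, yields (D1), (D2) and (D3) automatically. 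Only the parallel factor requires work.

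For (i), I would write the dispersion relation of \eqref{cp1} on the parallel factor as
$$
\det\bigl(-\lambda^2 B^{00}_\parallel+\lambda(A^0_\parallel-iC_\parallel(\bxi))+iA_\parallel(\bxi)+B_\parallel(\bxi)\bigr)=0,
$$
where $A^0_\parallel$ and $A_\parallel(\bxi)$ come from linearizing the Euler operator at $\bar\psi$ and encode the sound speed $c_s^2=p'(\theta)/(\theta p''(\theta))$. After $\lambda=\xi z$ this is a real quartic in $z$ whose coefficients are polynomials in the dissipation parameters and in $c_s^{\pm 2}$; applying the Routh--Hurwitz criterion and simplifying yields precisely \eqref{D311} and \eqref{D321}. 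The ``at least one strict'' clause reflects the fact that Routh--Hurwitz for this quartic requires strict positivity of only one of the relevant Hurwitz minors, the others being allowed to vanish.

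For (ii), since $A^0>0$ and $A(\bxi)$ is symmetric, $(\bar A^0)^{-1/2}\bar A(\bomega)(\bar A^0)^{-1/2}$ is already self-adjoint, so $\bar S\equiv I$ is a symbolic symmetrizer. On the parallel factor this matrix has the two simple real eigenvalues $\pm c_s$ with explicit eigenvectors. Projecting the symmetric part of $W_1$ onto each of the two one-dimensional eigenspaces yields a scalar; by the $\bxi\mapsto-\bxi$ symmetry the two scalars coincide, and, once the explicit blocks from \eqref{BBC} together with the Euler blocks are inserted, their common negativity reduces exactly to \eqref{D31}.

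For (iii), under \eqref{hb2}, \eqref{hb3} taken strict the $4\times 4$ matrix $\mathcal B_\parallel$ has four simple purely imaginary eigenvalues $\pm ib_1,\pm ib_2$, the $b_j^2$ being the roots of the quadratic $\mathfrak p$ in \eqref{pi}, and a smooth symmetrizer $\bar{\mathcal S}$ is built from their spectral projections. The eigenvectors are $(V_j,ib_jV_j)$ with $V_j\in\ker(b_j^2 B^{00}_\parallel+B_\parallel+b_jC_\parallel)$; evaluating the symmetric part of $\mathcal W_1$ on them and using \eqref{pi} to eliminate $b_j^4$, the four projected scalars collapse onto a common nonzero multiple of a single polynomial in the parameters which, after Euler-block substitution, agrees with the left-hand side of \eqref{D321}. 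Assuming (D3), that expression is already $\ge 0$, so uniform negativity on every eigenspace is equivalent to its non-vanishing, i.e.\ to \eqref{D2}; equality in \eqref{D321} forces the coalescence $b_1=b_2$, an eigenspace of $\mathcal B_\parallel$ jumps in dimension, and the projected quadratic form degenerates---precisely the failure mode that (D2) is designed to rule out. The main obstacle is this algebraic reduction: showing that the four a priori independent scalar projections in (iii) collapse onto a single polynomial factor equal to the left-hand side of \eqref{D32}, and that the residual multiplicative factor is manifestly sign-preserving; the analogous but simpler collapse in (ii) is what produces the $c_s^{-2}$ and $c_s^{-4}$ terms in \eqref{D31}.
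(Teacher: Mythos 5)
Your route coincides with the paper's: split along $\C^4=(\C\times\C\bxi)\oplus(\{0\}\times\{\bxi\}^\perp)$, dispose of the transverse block by the argument of \cite{FS}, Section 3.2 (indeed $A_\perp=C_\perp=0$ and $\mu,\eta>0$ there), prove (i) by Routh--Hurwitz on the quartic dispersion relation, (ii) with the trivial symmetrizer $S=I$ and the eigenspaces $\C(\pm1,1)^t$ of $W_0$, and (iii) through the four simple purely imaginary eigenvalues of $\mathcal B_\parallel$. Part (ii) of your sketch is essentially the paper's computation. But two steps, as you state them, are incorrect and need repair.

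First, in (i) the ``at least one strict'' clause has nothing to do with Routh--Hurwitz ``requiring strict positivity of only one of the relevant Hurwitz minors'': for each fixed $\bxi\neq0$ the criterion demands strict positivity of all of them. The clause comes from uniformity in $\bxi$: the substitution $\lambda=\xi z$ does not homogenize the symbol (it mixes first- and second-order terms), so one keeps $\alpha=\xi^2$ as a parameter; the critical Hurwitz determinant then factors as $\Delta(\alpha)=\alpha(\Delta^{(1)}+\alpha\Delta^{(2)})$ with $\Delta^{(1)},\Delta^{(2)}$ independent of $\alpha$ and equal (after the $c_s$-rescaling) to the left-hand sides of \eqref{D311} and \eqref{D321}, and positivity for \emph{all} $\alpha\in(0,\infty)$ is equivalent to $\Delta^{(1)},\Delta^{(2)}\ge0$ with at least one strict. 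Second, in (iii) the claimed collapse is false: with left/right eigenvectors $L_s,R_s$ one finds $L_s\bigl(\begin{smallmatrix}0&0\\-i\bar A&-\bar A^0\end{smallmatrix}\bigr)R_s=(\beta_s/\mu)\,q(\beta_s)$ for a quadratic $q$, and $q(\beta_1)$, $q(\beta_2)$ are in general two \emph{different} numbers, neither of which is a fixed multiple of the left-hand side of \eqref{D32}. The genuine work is to convert the simultaneous non-vanishing of these two surd expressions, $\beta_{1,2}=\tfrac12(\mathfrak k\mp\sqrt{\mathfrak k^2-4\mathfrak l})$, into a rational condition; rationalizing and squaring yields a single polynomial which agrees with the left-hand side of \eqref{D2} only up to a nonzero factor that is \emph{not} sign-definite (in the paper it is $\kappa^5\mu^2/(\nu(\tau+\nu)(\omega+\mu))$) --- which is harmless precisely because only non-vanishing, not a sign, is needed. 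Your heuristic that equality in \eqref{D321} forces $b_1=b_2$ is also wrong: \eqref{hb3} guarantees $b_1\neq b_2$ throughout; vanishing of \eqref{D321} corresponds to a marginally stable root of the dispersion relation at some finite $\xi$, not to eigenvalue coalescence of $\mathcal B_\parallel$. Finally, the degenerate case $(\tau+\mu)(\omega+\nu)=0$, treated separately (if briefly) in the paper, is not covered by your eigenvector formulas.
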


\begin{proof}
	As mentioned in the last proof, (D1), (D2), (D3) are satisfied on 
        $\{0\} \times \{\bxi\}^\perp$ under the assumptions $\mu, \eta>0$. 
        Thus we only need to consider the symbols on $\C \times \C\bxi$. All matrices below are evaluated at $\psi=\bar\psi$ and
        we drop the subscript $\parallel$. 
	
(i) The dispersion relation is given as
	\begin{align*} 
		0&=\det(-\lambda^2B^{00}+B(\xi)-i\lambda C(\xi)+\lambda A^0+iA(\xi))\\
		&=\kappa \mu\lambda^4+(\kappa+c_s^{-2}\mu)\lambda^3+(\xi^2((\tau+\mu)(\nu+\omega)-\kappa\sigma-\nu\mu)
                +c_s^{-2})\lambda^2+\lambda\xi^2(\tau+\omega+\mu-c_s^{-2}\sigma)\\
		&\quad +\xi^4\nu\sigma+\xi^2.
	\end{align*}
 We directly see that by rescaling $\kappa$ by $c_s^{-4}$, $\mu,\nu,\omega,\tau$ by $c_s^{-2}$ and $\xi^2,\lambda$ by 
$c_s^2$, the dispersion relation and \eqref{D31}, \eqref{D32} become independent of $c_s$. It is thus w.l.o.g.\ that we assume
for the rest of this proof that $c_s=1$. We use $\alpha=\xi^2$ and set
 \begin{align*}
 	a_0&=\alpha^2\nu\sigma+\alpha,~~a_1=\alpha(\tau+\omega+\mu-\sigma),\\
 	a_2&=\alpha((\tau+\mu)(\nu+\omega)-\kappa\sigma-\nu\mu)+1,~~a_3=\kappa+\mu,~~a_4=\kappa \mu.
 \end{align*}
 By the Routh-Hurwitz criterion, (D3) is equivalent to $a_i > 0$, $i=0,\ldots 4$, and 
 $$\Delta(\alpha):=a_1a_2a_3-(a_1^2a_4+a_3^2a_0)>0$$
 for all $\alpha \in (0,\infty)$. 
 Clearly $a_0,a_3,a_4>0$ by assumption and $a_2>0$ by \eqref{hb2}. Furthermore
 $$\Delta(\alpha)=\alpha(\Delta^{(1)}+\alpha\Delta^2),$$
 where
 \begin{align*} 
 	\Delta^{(1)}&=\omega+\tau-\kappa-\sigma\\
 	\Delta^{(2)}&=(\kappa+\mu)(\tau+\omega+\mu- \eta)[(\tau+\mu)(\omega+\nu)-(\mu\nu+\kappa \sigma)]-\kappa \mu(\tau+\omega+\mu-\sigma)^2-(\kappa+\mu)^2\nu\sigma.
 \end{align*}
As $\Delta^{(1)} \ge 0$ implies $a_1 >0$ this finishes the proof of (i).

(ii) To show (ii) and (iii) 
note that 
$$\bar B^{00}=-I_2,\bar B=\bar B(1)=-\begin{pmatrix}
	\frac{\nu}{\kappa} & 0\\
	0 & \frac{\sigma}{\mu}
\end{pmatrix},~~\bar C=\bar C(1)=-\frac1{\sqrt{\kappa\mu}}\begin{pmatrix} 0 & \tau+\nu\\ \omega+\mu \end{pmatrix},$$
$$\bar A^0=\begin{pmatrix} \frac1{\kappa} & 0 \\ 0 & \frac{1}{\mu} \end{pmatrix},\quad \bar A=\bar A(1)=\frac{1}{\sqrt{\kappa\mu}}\begin{pmatrix}0 &1 \\ 1&0\end{pmatrix}.$$

The eigenspaces of
$$W_0=(\bar A^0)^{-1/2}\bar A(\bar A^0)^{-1/2}=\begin{pmatrix} 0 & 1 \\ 1 &0
\end{pmatrix}$$
are $E_{\pm}=\C(\pm 1,1)^t$ and the restriction of 
$$W_1=(\bar A^0)^{-1/2}(-\bar B+(\bar A^0)^{-1}\bar A(\bar A^0)^{-1}\bar A+\bar C(\bar A^0)^{-1}\bar A)(\bar A^0)^{-1/2}$$
on $E_\pm$ is
$$\sigma+\kappa-(\omega+\tau),$$
which proves (ii).
\goodbreak

(iii) 
For better readability we only give the proof for $(\tau+\mu)(\omega+\nu)\neq 0$ at this point. 
The other case follows with analogous arguments.
By (C1.1), (C1.2) the matrix
$$\bar{\mathcal B}=\begin{pmatrix}
	0 & I\\
	-\bar B &i\bar C \end{pmatrix}$$
has four simple purely imaginary eigenvalues, which are given by
$$\lambda_{\pm s}=\pm ib_s=\pm\sqrt{-\beta_s},~~s=1,2$$
where $0<\beta_1<\beta_2$ are the roots of the polynomial $\mathfrak p$ that was defined in \eqref{pi}. To determine the left and right eigenvectors write
$$\bar{\mathcal B}
=\begin{pmatrix}
0 & 0 & 1 &0\\
0 & 0&0 &1\\
\mathfrak b & 0 &0 &-i\mathfrak a\\
0 & \mathfrak c &-i\mathfrak d &0
\end{pmatrix},$$
where
$$\mathfrak a=\frac{\tau+\mu}{\sqrt{\kappa\mu}},~~\mathfrak b=\frac{\nu}{\kappa},~~\mathfrak c=\frac{\sigma}{\mu},~~\mathfrak d
=\frac{\omega+\nu}{\sqrt{\kappa\mu}}.$$
Since $(\tau+\mu)(\omega+\nu) \neq 0$ we find that for $s=-2,-1,1,2$ a left respectively  right eigenvector corresponding to 
$\lambda_s$ is given as
$$L_s=\begin{pmatrix}
	-\mathfrak d\mathfrak bb_s , (\mathfrak d\mathfrak a-\mathfrak b-b_s^2)b_s^2,-i\mathfrak d, i(b_s^2+\mathfrak b)b_s
\end{pmatrix},\quad R_s=\begin{pmatrix}
	\mathfrak ab_s\\ -(\mathfrak b+b_s^2)\\i\mathfrak ab_s^2\\-i(\mathfrak b+b_s^2)b_s
\end{pmatrix}.$$
Assuming (D3) condition (D2) is now equivalent to
$$L_s
\begin{pmatrix} 
0&0\\-i\bar A&-\bar A^0
\end{pmatrix}
R_s \neq 0, s=-2,-1,1,2.$$
We find for $s=1,2$
\begin{align*}L_{\pm s} \mathcal A R_{\pm_s}
=
\frac{\beta_s}{\mu}\left[-\beta_s^2+\frac1\kappa\left(\tau+\omega+\mu-\nu-\frac1{\kappa}(\tau+\nu)(\omega+\mu)\right)\beta_s+\frac{(\tau+\omega+\mu)\nu}{\kappa^2} \right]=:\frac{\beta_s}{\mu}q(\beta_s).\end{align*}
With 
\begin{equation}
	\label{kl}
	 \mathfrak k:=\frac{(\tau+\mu)(\nu+\omega)-\kappa\sigma-\nu\mu}{\kappa\mu},\quad \mathfrak l:=\frac{\nu\sigma}{\kappa\mu}
\end{equation}
the zeros of the polynomial $\mathfrak p$ are given as
\begin{align*}\beta_1&=\frac12\left(\mathfrak k-\sqrt{\mathfrak k^2-4\mathfrak l}\right),\quad \beta_2=\frac12\left(\mathfrak k+\sqrt{\mathfrak k^2-4\mathfrak l}\right).
\end{align*} 
Also using
$$\mathfrak m=\frac1\kappa\left(\tau+\omega+\mu-\nu-\frac1{\kappa}(\tau+\nu)(\omega+\mu)\right),\quad \mathfrak n=\frac{(\tau+\omega+\mu)\nu}{\kappa^2}$$
we get that $q(\beta_s) \neq 0$ is equivalent to
$$\pm (2(\mathfrak l+\mathfrak n)+\mathfrak k(\mathfrak m-\mathfrak k)) \neq (\mathfrak k-\mathfrak m)\sqrt{\mathfrak k^2-4\mathfrak l}.$$
Squaring both sides gives
$$(\mathfrak l+\mathfrak n)^2+(\mathfrak m-\mathfrak k)(\mathfrak m\mathfrak l+\mathfrak k\mathfrak n) \neq 0$$
and straightforward calculations give
\begin{align*}
	&\frac{\kappa^5\mu^2}{\nu(\tau+\nu)(\omega+\mu)}((\mathfrak l+\mathfrak n)^2+(\mathfrak m-\mathfrak k)(\mathfrak m\mathfrak l+\mathfrak k\mathfrak n))\\
	&=(\kappa+\mu)(\tau+\omega+\mu- \eta)[(\tau+\mu)(\omega+\nu)-(\mu\nu+\kappa \sigma)]-\kappa \mu(\tau+\omega+\mu-\sigma)^2-(\kappa+\mu)^2\nu\sigma,
\end{align*}
which shows the assertion.
\end{proof}

\section{Causality}
\setcounter{equation}0
Equations describing the dynamics of fluids in the relativistic regime need to be causal, i.e. information must not propagate faster than the speed of light. 
This did not play a role in the previous argumentation as hyperbolicity and dissipation can be achieved 
independently of causality. However, at this point we will also formulate conditions equivalent to 
causality in order to characterize the regime of parameters for which the equations are 
physically relevant. 

\begin{defi}
	The hyperbolic differential operator
	\begin{equation*}
		\psi\mapsto
		B^{\alpha\beta\gamma\delta}(\psi)\frac{\partial\psi_\gamma}{\partial x^\beta\partial x^\delta}
	\end{equation*}
is causal if at any $\psi\in\mathcal U$ all solutions 
$(\varphi_0,\varphi_1,\varphi_2,\varphi_3) \in \R^4\setminus\{0\}$ of
$$\det(B^{\alpha\beta\gamma\delta}(\psi)\varphi_\beta\varphi_\delta)=0$$
are spacelike. 
\end{defi}

Note that causality is a Lorentz invariant property and thus again we only need to study the rest-frame representation 
of $B^{\alpha\beta\gamma\delta}$.

\begin{lemma}
	Let the operator \eqref{diffopB} be hyperbolic. Then it is causal if and only if $\eta/\mu \le 1$ 
        and
	\begin{align}
		  \label{ca4}
		  &(\tau+\mu)(\nu+\omega)-\kappa\sigma-\nu\mu \le \kappa \mu+\nu\sigma\quad\text{as well as}\quad \nu\sigma\le \kappa\mu.
	\end{align}
	
\end{lemma}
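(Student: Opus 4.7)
The plan is to exploit Lorentz invariance (already noted before the lemma) to reduce everything to the rest frame, then apply the parallel/perpendicular decomposition from Section~2 so that the characteristic matrix factors into two $2\times 2$ blocks which can be analyzed separately by elementary means.

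First, I would write
\[
M(\varphi) := B^{\alpha\beta\gamma\delta}\varphi_\beta\varphi_\delta = B^{00}\varphi_0^2 + C(\bxi)\varphi_0 + B(\bxi),
\]
and note that, thanks to \eqref{BBC}, $M$ decomposes as $M_\parallel\oplus M_\perp$ relative to $(\C\times\C\bxi)\oplus(\{0\}\times\{\bxi\}^\perp)$. Since $\kappa,\mu>0$ renders $B^{00}$ invertible, any $\varphi$ with $\bxi=0$ and $\det M(\varphi)=0$ must have $\varphi_0=0$; we may therefore restrict to $\bxi\neq 0$ and parameterize by $\rho := \varphi_0^2/\xi^2$, with $\varphi$ spacelike precisely when $\rho<1$ (and null when $\rho=1$). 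Causality is then equivalent to requiring that every non-negative root $\rho$ of $\det M_\parallel$ or $\det M_\perp$ satisfies $\rho\le 1$.

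The perpendicular block is immediate: a direct substitution from \eqref{BBC} gives $M_\perp = (\eta\xi^2 - \mu\varphi_0^2)I_2$, so $\det M_\perp=0$ iff $\rho = \eta/\mu$, yielding the condition $\eta/\mu\le 1$. For the parallel block I would compute
\[
\det M_\parallel = \xi^4\bigl(\kappa\mu\,\rho^2 - K\,\rho + \nu\sigma\bigr),\qquad K := (\tau+\mu)(\nu+\omega)-\kappa\sigma-\nu\mu,
\]
recognizing this as $\kappa\mu\,\xi^4\,\mathfrak p(\rho)$ for the polynomial $\mathfrak p$ appearing in \eqref{pi}. By the hyperbolicity hypothesis (Lemma~1), the two roots $\rho_1\le\rho_2$ of $\mathfrak p$ are real and non-negative, and the task reduces to characterizing when both lie in $[0,1]$. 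Writing $\mathfrak p(\rho)=\kappa\mu(\rho-\rho_1)(\rho-\rho_2)$, the condition $\mathfrak p(1)=\kappa\mu(1-\rho_1)(1-\rho_2)\ge 0$ rules out the configuration $\rho_1<1<\rho_2$, while $\rho_1\rho_2=\nu\sigma/(\kappa\mu)\le 1$ rules out the remaining configuration $1\le\rho_1\le\rho_2$ with product exceeding $1$. Expanding $\mathfrak p(1)\ge 0$ gives $K\le \kappa\mu+\nu\sigma$, which together with $\nu\sigma\le\kappa\mu$ is exactly \eqref{ca4}.

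The only step demanding genuine care is this final quadratic-root analysis: one has to double-check the degenerate configurations permitted by Lemma~1 (the double root in case (ii), and any vanishing-root scenarios of case (iii)) to confirm that no extra characteristic covectors appear which could violate causality, and to verify that both inequalities are truly necessary so that boundary equalities such as $\nu\sigma=\kappa\mu$ force the null case $\rho=1$ rather than a timelike characteristic. Beyond this routine bookkeeping, no serious computation is involved, and the proof is otherwise a direct corollary of the structure already set up for Lemma~1.
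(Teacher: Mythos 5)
Your proposal is correct and follows essentially the same route as the paper: reduce to the rest frame by Lorentz invariance, split into the parallel and perpendicular blocks of \eqref{BBC}, identify the perpendicular characteristic with $\eta/\mu$ and the parallel characteristics with the roots of the polynomial $\mathfrak p$ from \eqref{pi}, and translate ``both roots lie in $[0,1]$'' into $\mathfrak p(1)\ge 0$ together with root-product $\nu\sigma/(\kappa\mu)\le 1$, i.e.\ \eqref{ca4}. The degenerate cases you flag (the double root under (C2.1) and the zero root when $\nu=\sigma=0$) cause no trouble in this scheme, exactly as in the paper's argument.
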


\begin{proof}
	It is obvious that 
	$$\det(B^{\alpha\beta\gamma\delta}(\bar \psi)\varphi_\beta\varphi_\delta)=0$$
    is equivalent to the fact that $\varphi_0$ is a characteristic speed of 
    $i\mathcal B(\bar \psi, \varphi_1,\varphi_2,\varphi_3)$
    As seen in the proof of Lemma \ref{hb} these speeds
    are, in the rest frame,
    $$b^\bot_\pm=\pm\sqrt{\frac{\eta}{\mu}},\quad b_{\pm s}=\pm \sqrt \beta_s,~~s=1,2$$
    with $\beta_1,\beta_2$ 
    the two roots of the polynomial
    $$
    \mathfrak p(\beta)=\beta^2-\mathfrak k\beta +\mathfrak l
    $$
    as above.
    Trivially, $|b^\bot_\pm|\le 1$ if and only if $\eta \le \mu$,  and 
    the property $|\beta_s| \le 1$, $s=1,2$,
    is equivalent to $\mathfrak k-1\le \mathfrak l\le \mathfrak l$, which is \eqref{ca4}.
    \end{proof}
\goodbreak
We finally look at the causality of the left-hand side of \eqref{cp1}, assuming that 
\begin{equation}\label{symm}
       C=C^\top. 
\end{equation}

\begin{lemma}  
If (D) holds and $b_{max}$ is the largest and  
$
b_{min}=-b_{max}
$
the smallest eigenvalue of $i\bar{\mathcal B}$,
the eigenvalues $a$ of $(\bar A^0)^{-1/2}\bar A(\bomega) (\bar A^0)^{-1/2}$ satisfy 
\begin{equation}\label{subchar}
b_{min}\le a\le b_{max}.
\end{equation}
In particular, if the operator \eqref{diffopB} is causal, then subluminality of the speed of sound, $c_s^2\le 1$, is a necessary   
condition for system \eqref{cp1} to have the dissipativity property (D). 
\end{lemma}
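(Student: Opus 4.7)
The plan is to derive the subcharacteristic inequality \eqref{subchar} from (D1) by a short quadratic-form argument and then to deduce $c_s^2\le 1$ as an immediate corollary.

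First I would unpack (D1). Because the Euler symbol $\tilde W_0:=(\bar A^0)^{-1/2}\bar A(\bomega)(\bar A^0)^{-1/2}$ is already Hermitian (by symmetry of the $A^\beta$), any symbolic symmetrizer $S$ in (D1) commutes with $\tilde W_0$; its eigenspaces $E_a$ are thus $S$-invariant and, within each $E_a$, one may pick a simultaneous eigenvector $\tilde V$ of $S|_{E_a}$. For such a $\tilde V$ the weight $s>0$ by which $S$ acts cancels on both sides of (D1), reducing the inequality to
\[
\mathrm{Re}\,\langle\tilde V,\tilde W_1\tilde V\rangle \;\le\; -c\,\|\tilde V\|^2,\qquad c>0.
\]
Setting $V':=(\bar A^0)^{-1/2}\tilde V$ (which satisfies $\bar AV'=a\bar A^0V'$) and invoking \eqref{symm} to make $\bar C(\bomega)$ Hermitian, a direct computation gives $\tilde W_1\tilde V=(\bar A^0)^{-1/2}\Phi(a)V'$, where $\Phi(b):=b^2 I-\bar B(\bomega)+b\bar C(\bomega)$ is a Hermitian-valued quadratic in the real parameter $b$; the previous display then becomes
\[
\langle V',\,\Phi(a)\,V'\rangle \;<\;0.
\]

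Next I would consider the scalar quadratic $q(b):=\langle V',\Phi(b)V'\rangle$, which has positive leading coefficient $\|V'\|^2$ and satisfies $q(a)<0$. The zeros of $\det\Phi(b)$ coincide, by Lemma \ref{hb} together with the symmetry $b_{\min}=-b_{\max}$, with the characteristic speeds of $i\bar{\mathcal B}(\bomega)$; thus $\det\Phi(b)\ne 0$ for $|b|>b_{\max}$. Since $\Phi(b)/b^2\to I$ as $|b|\to\infty$, a continuity argument on the Hermitian spectrum shows that no eigenvalue of $\Phi(b)$ can change sign on the connected set $\{|b|>b_{\max}\}$; hence $\Phi(b)>0$ there, and $\Phi(\pm b_{\max})\ge 0$ by passing to the limit. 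This gives $q(\pm b_{\max})\ge 0$. Combined with $q(a)<0$ and the positive leading coefficient, both real roots of $q$ must lie in $[b_{\min},b_{\max}]$, with $a$ strictly between them, which proves \eqref{subchar}.

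For the concluding statement, causality forces $b_{\max}\le 1$, while the eigenvalues of $\tilde W_0$ are the characteristic speeds of the relativistic barotropic Euler operator, which include $\pm c_s$. Applying \eqref{subchar} to $a=c_s$ therefore yields $c_s\le b_{\max}\le 1$, i.e., $c_s^2\le 1$. I expect the main obstacle to be the spectral-continuity step justifying $\Phi(b)>0$ for $|b|>b_{\max}$; the idea is standard (eigenvalues of a Hermitian family are continuous and can only cross zero where the determinant vanishes) but should be spelled out. The other delicate point, namely handling a general symbolic symmetrizer $S$ in (D1) rather than $S=I$, is absorbed by the simultaneous-eigenvector trick used above.
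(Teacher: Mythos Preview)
Your argument is correct and follows essentially the same route as the paper: both hinge on the observation that on the eigenspace $E_a$ the form $W_1$ reduces to $(\bar A^0)^{-1/2}\Phi(a)(\bar A^0)^{-1/2}$ with $\Phi(b)=b^2I-\bar B+b\bar C$, and that $\Phi(b)>0$ for $|b|>b_{\max}$ because $\det\Phi$ vanishes exactly at the characteristic speeds of $i\bar{\mathcal B}$ (the paper records this as the identity $\det(-\bar B+a\bar C+a^2I)=\det(i\bar{\mathcal B}-aI)$). Your treatment is in fact slightly more complete, since the paper simply takes $S=I$ in (D1) without arguing why this is without loss of generality, whereas your simultaneous-eigenvector observation shows that the relevant scalar inequality $\langle V',\Phi(a)V'\rangle<0$ is independent of the choice of symmetrizer.
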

\begin{proof}
(D1) is violated if $W_1+W_1^*$, with $S=I$, is positive on an eigenspace of $(\bar A^0)^{-1/2}\bar A(\bomega) (\bar A^0)^{-1/2}$. 
However, restricted to such eigenspace associated with an eigenvalue $a$,
$$
W_1=W_1^*=(\bar A^0)^{-1/2}(-\bar B+a\bar C+a^2I)(\bar A^0)^{-1/2}
$$
and as 
\begin{equation}\label{W1pos}
\det(-\bar B+a\bar C+a^2I) = \det(i\bar{\mathcal B}-aI),
\end{equation}
$W_1+W_1^*$ is positive if $a\notin [b_{min},b_{max}]$. In particular, $c_s^2\le 1$ if $b_{max}\le 1$.
\end{proof}
\begin{rem} 
Inequality \eqref{subchar} means a classical \emph{subcharacteristic} condition, saying that the speed range 
of the equilibrium system must be contained in that of the regularizing operator (cf.\ \cite{w,l,cll,y}).
\end{rem}
\begin{rem}
Properties \eqref{BBC} show that 
assumption \eqref{symm} is satisfied both in the positive case ((C2), $B_{\parallel}>0$), 
thus comprising the description advocated in \cite{ft14,ft18}, and in 
the negative case ((C1), $B_{\parallel}<0$) if 
$$
\mu=\nu\quad\text{and}\quad\tau=\omega;  
$$
this latter condition characterizes the fully symmetric setting that was introduced in 
Definition 2(i) and eq.\ (2.7) of \cite{f20}, here
comprising in particular the original model of \cite{bem18}.
\end{rem}

{\bf Acknowledgment.} This research was supported by DFG Grant No.\ FR 822/11-1 and in part by grant NSF PHY-1748958 to the Kavli Institute for Theoretical Physics (KITP). H. F. thanks the Institute and the 
organizers of its programme "The Many Faces of Relativistic Fluid Dynamics" in which 
he participated during May/June 2023.

{\bf Conflict of interest.} The authors have no competing interests to declare.
\newpage

\end{document}